\documentclass[12pt]{article}
\usepackage[latin1]{inputenc}
\usepackage{amsmath}
\usepackage{amsfonts}
\usepackage{amssymb,amsthm, stmaryrd}
\usepackage{graphicx}
\usepackage{xcolor}
\usepackage{multicol}
\usepackage{soul}
\usepackage[pagewise, displaymath, mathlines]{lineno}

\newcommand*\patchAmsMathEnvironmentForLineno[1]{%
  \expandafter\let\csname old#1\expandafter\endcsname\csname #1\endcsname
  \expandafter\let\csname oldend#1\expandafter\endcsname\csname end#1\endcsname
  \renewenvironment{#1}%
     {\linenomath\csname old#1\endcsname}%
     {\csname oldend#1\endcsname\endlinenomath}}%
\newcommand*\patchBothAmsMathEnvironmentsForLineno[1]{%
  \patchAmsMathEnvironmentForLineno{#1}%
  \patchAmsMathEnvironmentForLineno{#1*}}%
\AtBeginDocument{%
\patchBothAmsMathEnvironmentsForLineno{equation}%
\patchBothAmsMathEnvironmentsForLineno{align}%
\patchBothAmsMathEnvironmentsForLineno{flalign}%
\patchBothAmsMathEnvironmentsForLineno{alignat}%
\patchBothAmsMathEnvironmentsForLineno{gather}%
\patchBothAmsMathEnvironmentsForLineno{multline}%
}

\textwidth=6.0in \textheight=8.5in \evensidemargin=0in
\oddsidemargin=0in \topmargin=0in \topskip=0pt \baselineskip=12pt

\newtheorem{theorem}{Theorem}[section]

\newtheorem{question}[theorem]{Question}

\newtheorem{example}[theorem]{Example}

\newcommand{\Z}{\mbox{${\mathbb Z}$}}

\title{Rainbow considerations around the\\ Hales-Jewett theorem}


\begin{document}

\date{}

\maketitle

\begin{center}
Amanda Montejano\\[1ex]
{\small UMDI, Facultad de Ciencias\\
UNAM Juriquilla\\
Quer\'etaro, M\'exico\\
amandamontejano@ciencias.unam.mx}\\[4ex]
\end{center}

\begin{abstract}
For positive integers $t$ and $n$ let $C_t^n$ be the $n$-cube over $t$ elements, that is, the set of ordered $n$-tuples over the alphabet $\{0,\dots, t-1\}$. We address the question of whether a balanced finite coloring of $C_t^n$ guarantees the presence of a rainbow geometric or combinatorial line. For every even $t\geq 4$ and every $n$, we provide a  $\left(\frac{t}{2}\right)^n$--coloring of $C_t^n$ such that all color classes have the same size, and  without rainbow combinatorial or geometric lines. 

\end{abstract}

\section{Results}\label{sec:main}

In contrast to the classical results in Ramsey theory that seek for monochromatic structures, the realm of anti-Ramsey theory is dedicated to investigating the existence of rainbow (multicolored or totally multicolored) structures. To guarantee the existence of a structure with a color pattern that is not monochromatic, usually  we need to assume that all colors are somehow well represented. According to this, in 2003, Jungi{\'c}, Licht (Fox), Mahdian, Ne{\v{s}}etril and Radoi{\v{c}}i{\'c} \cite{jetal}  initiated what they referred to as the \emph{rainbow Ramsey theory}.  This theory focuses on the existence of rainbow structures under specific density conditions within the color classes. Recent attention to this approach is clearly reflected in various studies (see, for instance, \cite{af, cjr, dlmor, hm, jnr, ms} and references therein).

The Hales-Jewett theorem is an important result that, as stated by Graham, Rothschild, and Spencer \cite{grs}, ``reveals the hart of Ramsey theory and provides a focal point from which many results can be derived, acting  as a cornerstone for much of the more advanced work". In this paper, we present two results related to the impossibility of having an anti-Ramsey version of the Hales-Jewett theorem, thus addressing a question posed by the authors in  \cite{jetal}.

To proceed, let us establish some terminology. For integers $a$ and $b$ we use the notation $[a,b]$ to denote the set of integers $x$ such that $a\leq x\leq b$. Given non-negative integers, $t$ and $n$, we define the \emph{$n$-cube over $t$ elements}, denoted as $C_t^n$,  as the set of ordered $n$-tuples of elements taken from the alphabet $[0, t-1]$, that is, $$C_t^n=\{(x_1,x_2,...,x_n):x_i\in [0,t-1]\}.$$
The elements of $C_t^n$ are called \emph{points} or \emph{words} and, to simplify notation, we remove commas and parentheses. We also use  bold notation to refer to a point as $\textbf{x}=\,x_{1}\,x_{2}\, ...\, x_{n}\,\in C_t^n$.

A \emph{geometric line} within the $n$-cube $C_t^n$ is a sequence of $t$ distinct points,  $$\textbf{x}_0\, \textbf{x}_1\,...\,\textbf{x}_{t-1},$$ where each $\textbf{x}_i=\,x_{i\,1}\,x_{i\,2}\, ...\, x_{i\,n}\,$, that satisfy specific conditions for each coordinate $j \in[1, n]$. These conditions are as follows: either \begin{equation}x_{0\,j}=x_{1\,j}= ... =x_{t-1\,j}\end{equation} or \begin{equation}x_{i\,j}=i \mbox{ for every } 0\leq i\leq t-1\end{equation} or \begin{equation}x_{i\,j}=n-i \mbox{ for every } 0\leq i\leq t-1.\end{equation}  A geometric line that does not satisfy condition (3) is is referred to as a  \emph{combinatorial line}.
 
A \emph{$k$-coloring} of $C_t^n$ is a mapping $f:C_t^n \to [0,k-1]$ or, equivalently, a partition $C_t^n=C_0\cup C_1\cup... \cup C_{k-1}$, where each set $C_s:=f^{-1}(s)$ is called a \emph{color class}. A subset of $C_t^n$ is called \emph{monochromatic} if all its elements belong to the same color class.  A subset of $C_t^n$ is called \emph{rainbow} if all its elements belong to a different color class. 

The Hales-Jewett theorem asserts the existence of a monochromatic geometric (or combinatorial) line within any finite coloring of $C_t^n$ provided that $n$ is sufficiently large in terms of $k$ and $t$.

\begin{theorem}[Hales--Jewett, \cite{hj}] 
For every pair of positive integers $t$ and $k$ there exists a least positive integer
$HJ(t, k)$ such that, for $n\geq HJ(k,t)$, every $k$-coloring of $C_t^n$ contains a monochromatic geometric (combinatorial) line.
\end{theorem}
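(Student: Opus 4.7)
The plan is to establish the combinatorial version of Hales--Jewett by induction on $t$; the geometric version then follows immediately, since by the paper's convention every combinatorial line is also a geometric line. The base case $t=2$ is handled directly: in $C_2^n$ the monotone chain $\mathbf{0},\,\mathbf{e}_1,\,\mathbf{e}_1{+}\mathbf{e}_2,\,\ldots,\,\mathbf{1}$ consists of $n+1$ points, any two consecutive of which are the two endpoints of a combinatorial line, so pigeonhole gives $HJ(2,k)\le k$ at once.

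For the inductive step I would follow Shelah's cube-lemma approach, which is both conceptually cleaner than the original focused-lines argument of \cite{hj} and yields primitive recursive bounds. Assume $HJ(t-1, k')$ exists for every $k'$. The main technical device is an \emph{insensitivity lemma}: for $n$ sufficiently large in terms of $t$ and $k$, every $k$-coloring $c$ of $C_t^n$ admits a combinatorial subspace $S\cong C_t^m$ with $m\ge HJ(t-1,k)$ on which $c$ is \emph{$(t-1)$-to-$(t-2)$ insensitive} --- that is, swapping the symbol $t-1$ for $t-2$ on any subset of the coordinates that are ``active'' in the parametrization of $S$ does not alter the color. Given the lemma, one views $c|_S$ as a well-defined $k$-coloring of $C_{t-1}^m$ (by identifying the symbols $t-1$ and $t-2$); the inductive hypothesis then supplies a monochromatic combinatorial line $L\subset C_{t-1}^m$, and insensitivity guarantees that the natural lift of $L$ to a combinatorial line in $C_t^m\subset S$ is also monochromatic, since the extra point carrying the symbol $t-1$ has the same color as the point of $L$ carrying $t-2$.

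The insensitivity lemma itself I would prove by iterated pigeonhole over $m$ blocks of coordinates selected one at a time. For each block, one exhibits --- for every possible ``context'' determined by the remaining blocks --- a pair of words differing only in the prescribed swap and receiving the same color, and then one applies pigeonhole on the resulting context-to-color assignments to enforce uniformity across contexts. This is where the real work lies and is, to my mind, the principal obstacle: because each block must absorb all possible colorings of the contexts contributed by later blocks, the required $n$ blows up as a tower in $t$, and the delicate part is the bookkeeping of contexts and the order in which blocks are fixed. Once the insensitivity lemma is in place, the reduction from $t$ to $t-1$ is brief, and the inductive step closes.
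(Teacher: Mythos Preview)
The paper does not give a proof of this theorem at all: it is quoted as background, attributed to Hales and Jewett via the citation \cite{hj}, and used only as motivation for the rainbow questions that follow. There is therefore no ``paper's own proof'' to compare your proposal against.

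On its own merits, your outline is a faithful sketch of Shelah's proof of Hales--Jewett (induction on $t$ via an insensitivity/cube lemma), which is a standard and correct route. One small wrinkle in your base case: saying that \emph{consecutive} members of the chain $\mathbf{0},\mathbf{e}_1,\mathbf{e}_1+\mathbf{e}_2,\ldots,\mathbf{1}$ form combinatorial lines is not by itself enough for plain pigeonhole to yield $HJ(2,k)\le k$; what you actually want (and what is true) is that \emph{any} two distinct members of that chain are the two points of a combinatorial line, after which pigeonhole on $n+1>k$ points finishes immediately.
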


It is well known that the Hales-Jewett theorem implies van der Waerden's theorem on arithmetic progressions.
In 2003, Jungi{\'c} et al. \cite{jetal} explore a rainbow counterpart of van der Waerden's theorem, demonstrating the need for all color classes to be sufficiently large to guarantee a rainbow $3$-term arithmetic progression. In the same paper, the authors proposed the study of a rainbow counterpart of the  Hales-Jewett theorem as an exciting possibility,  posing the following question:

\begin{question}[Jungic \emph{et al.} \cite{jetal}]\label{q:1}
Is it true that for every balanced $t$-coloring of $C_t^n$ there exists a rainbow geometric line? 
\end{question}

 A coloring with all color classes having the same cardinality is referred to as a \emph{balanced coloring}.

As a partial negative answer of Question \ref{q:1}, the authors in \cite{jetal} provided an example of a balanced $3$-coloring of $C_3^3$ without rainbow geometric lines. The coloring is the following:

\begin{center}
$C_0=\{000,002,020,200,220,022,202,222,001\}$

$C_1=\{011,021,101,201,111,221,010,210,012\}$

$C_2=\{100,110,120,121,211,102,112,122,212\}$
\end{center}

In this paper we answer  Question \ref{q:1} for all values of $n\geq 1$ and $t\geq 2$. The case $n=1$ is trivial. For $t=2$, every balanced $2$-coloring of $C_2^n$ contains a rainbow line (a positive answer to Question \ref{q:1}).  However, this is not the case in general.  We show in Theorem \ref{thm:Q1} that, with only one exception, in the rest of the  cases the answer of Question \ref{q:1}  is negative.

\begin{theorem}\label{thm:Q1}
For every $t\geq 3$ and every $n\geq 2$, there are balanced $t$-colorings of $C_t^n$ without rainbow geometric lines, except for the case $(t,n)=(3,2)$.
\end{theorem}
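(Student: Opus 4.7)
\emph{Padding step.} My plan is to combine a simple padding reduction with explicit base constructions. First, I would establish the following padding principle: given a balanced $t$-coloring $f$ of $C_t^n$ without rainbow geometric lines, the coloring $\tilde f(x_0,x_1,\dots,x_n):=f(x_1,\dots,x_n)$ on $C_t^{n+1}$ remains balanced and rainbow-free. Indeed, any geometric line in $C_t^{n+1}$ splits into two cases: if its activity lies only in coordinate $0$ then all its points agree on coordinates $1,\dots,n$, so $\tilde f$ is constant on it; if some coordinate $j\geq 1$ is active, the projection of the line to coordinates $1,\dots,n$ is again a geometric line in $C_t^n$, hence not rainbow under $f$. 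This reduces the theorem to producing a base example for each $t\geq 3$ at some $n_0(t)$ and iterating. For $t=3$, the paper's explicit example already serves at $n_0=3$. The genuine work is then to construct, for every $t\geq 4$, a balanced $t$-coloring of $C_t^2$ whose rows, columns, main diagonal, and anti-diagonal are all non-rainbow.

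\emph{Base case, even $t\geq 4$.} For this I would use the block coloring $f(x_1,x_2)=2\lfloor x_1/2\rfloor+\lfloor 2x_2/t\rfloor$, which tiles the $t\times t$ grid by $t$ rectangles of size $2\times(t/2)$, each receiving a distinct color. Balance is immediate. Rows meet exactly $2$ blocks and columns exactly $t/2$, and by counting how many times $\lfloor i/2\rfloor$ and $\lfloor 2i/t\rfloor$ jump as $i$ runs over $[0,t-1]$, each of the two diagonals passes through at most $t/2+1<t$ blocks. Hence no geometric line is rainbow.

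\emph{Base case, odd $t\geq 5$.} Here the direct block construction fails, so I would use a hybrid: apply the even-$t$ block coloring with $t-1$ colors on the inner $(t-1)\times(t-1)$ subgrid $[0,t-2]^2$ (valid because $t-1\geq 4$ is even), then place color $0$ at $(t-1,0)$, color $t-1$ at every remaining cell of the last row, and the values $1,2,\dots,t-1$ in that order along the top $t-1$ cells of the last column. Balance is a routine count. Each row and column gains at most one new color relative to the inner coloring, staying below $t$ for $t\geq 5$, and the main diagonal only acquires the corner color $t-1$. The hard part will be the anti-diagonal, which intersects both extra cells at $(0,t-1)$ and $(t-1,0)$: for $t\geq 7$ odd the count of at most $(t+1)/2+2<t$ colors is loose, but at $t=5$ the inequality is tight, and the specific choice of permutation on the last column, which avoids placing color $t-1$ at $(0,t-1)$, is exactly what keeps color $t-1$ missing from the anti-diagonal. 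The case $(t,n)=(3,2)$ falls outside this approach and is the theorem's excluded exception.
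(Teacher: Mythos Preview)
Your approach is essentially the paper's: the padding step (ignoring extra coordinates) is identical to the paper's reduction $g(\textbf{x})=f_t(x_1,x_2)$ (resp.\ $f_3(x_1,x_2,x_3)$), and for $t=3$ you use the same $C_3^3$ example. The only difference is in the $2$-dimensional base for $t\ge4$: the paper builds its anti-Latin squares recursively by adding a row and column to a $4\times4$ seed, whereas you give a direct $2\times(t/2)$ block tiling for even $t$ and then append a border row/column for odd $t$; both constructions are valid and yield the same conclusion. One small omission: the theorem also asserts that $(t,n)=(3,2)$ is a genuine exception, which the paper handles by a short case analysis you do not mention.
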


The result  above may not be surprising, as there is a mapping from $C_t^n$ to $[1, t^n]$ such that every combinatorial or geometric line in $C_t^n$ corresponds to a $t$-term arithmetic progression in $[1, t^n]$, and there are constructions of balanced $t$-colorings of $[1, tN]$ without rainbow $t$-term arithmetic progressions for for all $t \geq 4$ and infinitely many values of $N$  (see \cite{af} and \cite{cjr}). In contrast,  for $t=3$, it is known that every balanced $3$-colorings of $[1,3n]$ contains a rainbow $3$-term arithmetic progression \cite{af}, which makes the case  of $C_3^n$ in Theorem \ref{thm:Q1} particulary interesting. Moreover, an intriguing problem emerges when we focus solely on balanced colorings, allowing for a greater number of colors than the intended length of a rainbow arithmetic progression. In other words, for a given $t\geq 3$, we seek to determine the minimum number of colors required to guarantee a rainbow $t$-term arithmetic progression in every balanced coloring of $[1,n]$, where $n$ is sufficiently large. Consequently, the following question naturally arises:

\begin{question}\label{q:2}
Does there exist a function $f(t)$, such that for all sufficiently large $n$ every balanced $f(t)$-coloring of $C_t^n$  contains a rainbow geometric line? 
\end{question}

The next result makes impossible a positive answer to Question \ref{q:2}.

\begin{theorem}\label{thm:main}
For every even $t\geq 4$ and every $n$, there are balanced  $\left(\frac{t}{2}\right)^n$-colorings of $C_t^n$ without rainbow geometric  lines.
\end{theorem}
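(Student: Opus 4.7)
The plan is to use an explicit pairing of the alphabet. Define the projection $\pi:[0,t-1]\to[0,t/2-1]$ by $\pi(x)=\lfloor x/2\rfloor$, so that $\pi(2k)=\pi(2k+1)=k$ for every $k\in[0,t/2-1]$, and extend coordinatewise to $f:C_t^n\to C_{t/2}^n$ via $f(x_1\,x_2\,\cdots\,x_n)=\pi(x_1)\,\pi(x_2)\,\cdots\,\pi(x_n)$. Use $f$ itself as the coloring. Each fiber of $f$ has exactly $2^n$ points (two preimages per coordinate), so this is a balanced $(t/2)^n$-coloring.

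To rule out rainbow geometric lines I would prove the stronger claim that, for every geometric line $\textbf{x}_0\,\textbf{x}_1\,\cdots\,\textbf{x}_{t-1}$ and every $k\in[0,t/2-1]$, one has $f(\textbf{x}_{2k})=f(\textbf{x}_{2k+1})$. This reduces to a coordinatewise check against the three conditions defining a geometric line. Constant coordinates (type~(1)) are immediate. For an ascending coordinate (type~(2)), where $x_{i\,j}=i$, one has $\pi(2k)=k=\pi(2k+1)$. A descending coordinate (type~(3)), where $x_{i\,j}=(t-1)-i$, is the key step: since $t$ is even, $t-1-2k$ is odd and $t-2-2k$ is even, and both satisfy $\lfloor\,\cdot\,/2\rfloor=t/2-1-k$. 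Hence $\textbf{x}_{2k}$ and $\textbf{x}_{2k+1}$ agree under $\pi$ in every coordinate, share a color, and the line uses at most $t/2<t$ colors; in particular no rainbow geometric line (and no rainbow combinatorial line) exists.

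The argument is essentially a one-line construction plus a short parity check, so there is no serious obstacle to executing it. The only place where the evenness of $t$ is used is case~(3): the pairing $\{2k,2k+1\}$ must be invariant under the reflection $x\mapsto(t-1)-x$, which holds precisely when $t$ is even. This matches the hypothesis of the theorem and also explains why the same quotient-by-pairing approach does not directly extend to odd~$t$.
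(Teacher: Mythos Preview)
Your proposal is correct and is essentially the same construction as the paper's: the paper's recursively defined $g_n(x_1\cdots x_n)=\sum_{i=1}^n (t/2)^{i-1}\lfloor x_i/2\rfloor$ is exactly your map $f$ composed with the base-$(t/2)$ bijection $C_{t/2}^n\cong[0,(t/2)^n-1]$. Your verification is slightly more direct (a coordinatewise product argument instead of induction on $n$) and you prove the marginally stronger fact that $f(\textbf{x}_{2k})=f(\textbf{x}_{2k+1})$ for every $k$, whereas the paper only checks $k=0$; both suffice and the underlying idea is identical.
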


We provide the proofs for Theorems \ref{thm:Q1} and  \ref{thm:main} in Section \ref{sec:proofs}. After that, in Section~\ref{sec:future}, we point and discuss potential problems to consider  for further research.

\section{Proofs}\label{sec:proofs}

From here we consider only colorings that use all possible colors. Thus, colorings are surjective mappings,  and color classes are nonempty sets. Recall that, a geometric (or combinatorial) line $\textbf{x}_0\, \textbf{x}_1\,...\,\textbf{x}_{t-1}$ of $C_t^n$ is called \emph{rainbow} with respect to the coloring $f:C_t^n \to [0,k-1]$, if   $f(\textbf{x}_i)\neq f(\textbf{x}_j)$ whenever $i\neq j$. A coloring of $C_t^n$  with no rainbow geometric (nor combinatorial) lines will be called a \emph{rainbow-free coloring}. 

For $n=2$ and  $t\geq 4$ it is easy to construct balanced $t$-colorings of $C_t^2$ without rainbow geometric lines. In fact, asking for a  balanced  rainbow-free $t$-coloring of $C_t^2$ is the same as asking for an ``anti-Latin square". That is, a $t\times t$ array of $t$ symbols, each symbol appearing $t$ times, such that in each row, each column and the two main diagonals (left and right) we have less than $t$ symbols. 

\begin{example}\label{ex:als}
For every  $t\geq 4$, define recursively a balanced rainbow-free $t$-colorings of $C_t^2$ as follows.
Start with any $4\times 4$ anti-Latin square, for instance: 
\begin{table}[h]
\centering
\label{my-label}
\begin{tabular}{|l|l|l|l|}
\hline
0 & 0  & 2  & 2    \\ \hline
0 & 0  & 2  & 2    \\ \hline
1 & 1  & 3  & 3    \\ \hline
1 & 1  & 3  & 3    \\ \hline
\end{tabular}
\end{table}

\noindent
Assume that such a $t\times t$ array, say $A_t$, exists for $t\geq 4$. Construct a $(t+1)\times (t+1)$ array from $A_t$ by adding a new left hand column and a new bottom row by using a new symbol $t+1$ times and the $t$ symbols already used in $A_t$ one more time, in such a way that the new symbol appear twice in the new column, twice in the new row, and twice in the right diagonal.  
\end{example}

By the example above we have a negative answer to  Question \ref{q:1} in the $2$-dimensional case when $t\geq 4$. Next we show that the answer still negative for every $t\geq 3$ and $n\geq 2$, except for the case $(t,n)=(3,2)$.

\begin{proof}[Proof of theorem \ref{thm:Q1}]
Since  $C_3^2$ has only $9$ elements, by a case analysis, it is not difficult to argue that every balanced $3$-coloring of $C_3^2$ contains a rainbow geometric line. 
For the remaining cases, we will construct an balanced coloring without rainbow geometric lines.  

Let $f_3$ be a balanced rainbow-free $3$-coloring of $C_3^3$ (consider, for instance, the one described at the beginning of this section). For each $t\geq 4$ let $f_t$ be a balanced rainbow-free $t$-coloring of $C_t^2$ (consider, for instance, the ones described in Example \ref{ex:als}). For $n\geq 2$, $t\geq 3$ and  $(t,n)\neq (3,2)$ define $g:C_t^n\to [0,t-1]$ as follows:  

 \begin{equation*}
g(\textbf{x})=g(x_1\,x_2\,...\,x_n)= \left\{ \begin{array}{ll}
f_3(x_1\,x_2\,x_3)& \mbox{if  $t=3$}.\\
\\
f_t(x_1\,x_2) & \mbox{if  $t\geq 4$}.\end{array} \right.
 \end{equation*}
 
 \noindent
By definition, if $t=3$, each color class  satisfies $$|g^{-1}(i)|=|f_3^{-1}(i)|(3^{n-3})=9(3^{n-3})=3^{n-1}$$ and, for $t\geq 4$,  each color class  satisfies $$|g^{-1}(i)|=|f_t^{-1}(i)|(t^{n-2})=t(t^{n-2})=t^{n-1}.$$
In both cases $g$ is a balanced $t$-coloring. In order to see that $g$ is a rainbow-free coloring consider first the case $t\geq 4$, and let  $\{\textbf{x}_0,\, \textbf{x}_1,\,...,\,\textbf{x}_{t-1}\}$ be a combinatorial line in $C_t^n$, where  $\textbf{x}_i=\,x_{i\,1}\,x_{i\,2}\, ...\, x_{i\,n}\,$. By letting $\textbf{x}'_i=x_{i\,1}\,x_{i\,2}\in C_t^2$, we have that $\{\textbf{x}'_0,\, \textbf{x}'_1,\,...,\,\textbf{x}'_{t-1}\}$ 
is either a singleton or a geometricl line in $C_t^2$. Therefore,  the set $$\{g(\textbf{x}_0)\, g(\textbf{x}_1)\,...\,g(\textbf{x}_{t-1})\}= \{f_t(\textbf{x}'_0), f_t(\textbf{x}'_1), ..., f_t(\textbf{x}'_{t-1})\}$$ is either a singleton or is the image  of a geometric line in $C_t^2$  under $f_t$, which, by hypothesis, has cardinality less than $t$ and so  $\{\textbf{x}_0,\, \textbf{x}_1,\,...,\,\textbf{x}_{t-1}\}$ is not a rainbow line. The case when $t=3$ is analogous. 
\end{proof}

Now we consider more colors than the length of the lines.

\begin{proof}[Proof of theorem \ref{thm:main}]
Let $\textbf{x} \in C_t^n$, $\textbf{x} = x_1x_2\,...\, x_n$ where each $x_i \in [0, t-1]$. We will recursively define a coloring $g_n:C_t^n \to [0,\left(\frac{t}{2}\right)^n-1]$. 
For $n=1$, let $g_1:C_t^1 \to [0, \frac{t}{2}-1]$ be defined as $$g_1(\textbf{x})=g_1(x_1)= \left\lfloor \frac{x_1}{2} \right\rfloor;$$
and for $n\geq 2$ define $$g_n(\textbf{x})=g_n(x_1x_2 \,...\, x_n)=g_{n-1}(x_1x_2 \,... \,x_{n-1})+\left(\frac{t}{2}\right)^{n-1}g_1(x_n).$$

For example,  the $3$-coloring of $C_6^1$ just defined is depicted as:

\begin{table}[h]
\centering
\label{my-label}
\begin{tabular}{|l|l|l|l|l|l|}
\hline
0 & 0  & 1  & 1  & 2 & 2  \\ \hline
\end{tabular}
\end{table} 

\begin{table}[h]
\centering
\label{my-label}
\begin{tabular}{|l|l|l|l|l|l|}
\hline
0 & 0  & 3  & 3  & 6 & 6  \\ \hline
0 & 0  & 3  & 3  & 6 & 6  \\ \hline
1 & 1  & 4  & 4  & 7 & 7  \\ \hline
1 & 1  & 4  & 4  & 7 & 7  \\ \hline
2 & 2  & 5  & 5  & 8 & 8  \\ \hline
2 & 2  & 5  & 5  & 8 & 8  \\ \hline
\end{tabular}
\end{table}

\noindent
\emph{Claim 1:} \emph{For every $n\geq 1$, $g_n$ is a balanced $\left(\frac{t}{2}\right)^n$-coloring.}

\noindent
By definition, $g_1$ is a $\left(\frac{t}{2}\right)$-coloring that uses each color twice (hence, it is balanced). We prove Claim 1 by induction on $n$. Thus, assume that $g_{n-1}$ is a balanced $\left(\frac{t}{2}\right)^{n-1}$-coloring with $|g_{n-1}^{-1}(i)|=2^{n-1}$ for every $i\in [0,\left(\frac{t}{2}\right)^{n-1}-1]$. We need to prove that $g_{n}$ is a $\left(\frac{t}{2}\right)^{n}$-coloring with $|g_{n}^{-1}(i)|=2^{n}$ for every $ i\in [0, \left(\frac{t}{2}\right)^{n}-1]$.
Consider the partitions $$\left[0,\left(\frac{t}{2}\right)^{n}-1\right]=\bigcup_{k=0}^{\frac{t}{2}-1}I_k,$$ where $I_k=[k \left(\frac{t}{2}\right)^{n-1},(k+1) \left(\frac{t}{2}\right)^{n-1}-1]$, and $$C_t^n=\bigcup_{j=0}^{t-1}\left(C_t^n\right)_j,$$ where $\left(C_t^n\right)_j=\{ x_1x_2\,...\, x_n\in C_t^n \,:\, x_n=j\}$.  
Denote by $\left.g_n\right|_j$ the coloring  $g_n$ restricted to the elements in $\left(C_t^n\right)_j$. By definition, for each $j\in[0,t-1]$, $$\left.g_n\right|_j:\left(C_t^n\right)_j\to I_{\left\lfloor\frac{j}{2}\right\rfloor},$$ and so, for each $k\in[0,\frac{t}{2}-1]$ and $i\in I_k$, we have $$g_n^{-1}(i)\subset \left(\left(C_t^n\right)_{2k}\cup \left(C_t^n\right)_{2k+1}\right).$$
Moreover, since  for every $j\in[0,t-1]$,  the coloring  $g_n$ restricted to $ \left(C_t^n\right)_j$ is isomorphic to the coloring  $g_{n-1}$ we actually have, for every $k\in[0,\frac{t}{2}-1]$ and $i\in I_k$, that
$$|g_n^{-1}(i)|=2|g_{n-1}^{-1}(i^*)|,$$ where $i^*=\left(i-k\left(\frac{t}{2}\right)^{n-1}\right)$. 
Hence, by the induction hypothesis that $g_{n-1}$ is balanced, we get 
$$|g_n^{-1}(i)|=2(2^{n-1})=2^n$$ as claimed. \\

\noindent
\emph{Claim 2:} \emph{For every $n\geq 1$, $g_n$ is a rainbow-free coloring. }

\noindent
Let  $\{\textbf{x}_0,\, \textbf{x}_1,\,...,\,\textbf{x}_{t-1}\}$ be a geometric line in $C_t^n$, where  $\textbf{x}_i=\,x_{i\,1}\,x_{i\,2}\, ...\, x_{i\,n}\,$.
In order to prove Claim 2, we will prove by induction on $n$ that $g_n(\textbf{x}_0)=g_n(\textbf{x}_1)$. This is clearly satisfied for $n=1$. 
Now,  let  $\textbf{x}'_i=\,x_{i\,1}\,x_{i\,2}\, ...\, x_{i\,n-1}\,$, thus $\textbf{x}_i=\textbf{x}'_i\, x_{i\,n}\,$. Then, $\{\textbf{x}'_0,\, \textbf{x}'_1,\,...,\,\textbf{x}'_{t-1}\}$ is either a singleton or a geometric line in $C_t^{n-1}$, and so, by the induction hypothesis, we get \begin{equation}\label{eq:ih}
g_{n-1}(\textbf{x}'_0)=g_{n-1}(\textbf{x}'_1).
\end{equation} 
Now, note that 
\begin{equation}\label{eq:g1}
g_1(x_{0\,n})= \left\lfloor \frac{x_{0\,n}}{2} \right\rfloor=\left\lfloor \frac{x_{1\,n}}{2} \right\rfloor =g_1(x_{1\,n})
\end{equation} holds true  regardless of which  condition, (1), (2) or (3), satisfy the $n$'th coordinate of the geometric line $\{\textbf{x}_0,\, \textbf{x}_1,\,...,\,\textbf{x}_{t-1}\}$.
Finally, in view of (\ref{eq:ih}) and (\ref{eq:g1}), we obtain
$$g_n(\textbf{x}_0)=g_{n-1}(\textbf{x}'_0)+\left(\frac{t}{2}\right)^{n-1}g_1(x_{0\,n})=g_{n-1}(\textbf{x}'_1)+\left(\frac{t}{2}\right)^{n-1}g_1(x_{1\,n})=g_n(\textbf{x}_1)$$ as claimed.
\end{proof}

\section{Future directions to work on}\label{sec:future}

The notion of a rainbow-free coloring coincides with the notion of
a proper strict coloring of a $\mathcal{C}$-hypergraph in the context of the
theory of coloring mixed hypergraphs (see \cite{v} or \cite{k} for some  studies in the subject). For a hypergraph $H$, the  \emph{upper chromatic number} of $H$, denoted by $\overline{\chi}(H)$, is defined as the largest integer $k$ for which there is a $k$-coloring of $V(H)$ without rainbow edges. Since   $\overline{\chi}(H)+1$ is then the minimum integer with the property that every coloring of $V(H)$ 
contains a rainbow edge, the problem of determining the upper chromatic number of a given hypergraph (or of all  members  of a given family of hypergraphs) is considered an extremal  anti-Ramsey problem. The upper chromatic number has been studied in many different contexts and has been redefined several times under different names. However, in view of the results presented in this paper, we propose the study of the balanced upper chromatic number, which is defined as the upper chromatic number restricted to balanced colorings. That is, given a hypergraph, $H$, the  \emph{balanced upper chromatic number} of $H$, denoted by $\overline{\chi}_b(H)$, is defined as the largest integer $k$ for which there is a balanced $k$-coloring of $V(H)$ without rainbow edges \cite{akm}. 
For example, when considering  the $n$-cube over $t$ elements, $C_t^n$, as a hypergraph, (with vertex set $C_t^n$, and edge set the set of geometric lines), 
Theorem \ref{thm:main} states that for every even $t\geq 4$ and every $n$, $$\overline{\chi}_b(C_t^n)\geq \left(\frac{t}{2}\right)^n.$$

The following are problems that we find interesting in this context around the Hales-Jewett theorem.

\begin{enumerate}

\item Determine or estimate the correct order of grow for 
$\overline{\chi}_b(C_t^n)$.

\item Study the behavior of the balanced upper chromatic number of $C_t^n$ viewed as an hypergraph where the set of edges as  the set of $m$-dimensional subspaces with $m$  an integer, $2\leq m\leq n$.

\item The results stated in  Section \ref{sec:main} about the existence of rainbow  arithmetic progressions in balanced colorings of $[1,n]$  can be restated as follows. Denote by $P_n^t$ 
the hypergraph with vertex set $[1,n]$ and edge set the set of $t$-term arithmetic progressions. Then, for every $n\equiv 0$ (mod $3$), $\overline{\chi}_b(P_n^3)=2$ and, for infinitely many values of $n$ and every $t\geq 4$, it happens that $\overline{\chi}_b(P_n^t)\geq t.$ Moreover,  the authors in \cite{jetal} prove that for $t\geq3$, and every $n\equiv 0$ (mod $t$), the following holds true $$\left\lfloor\frac{t^2}{4}\right\rfloor \leq \overline{\chi}_b(P_n^t)\leq \frac{t(t-1)^2}{2}-1,$$ and conjecture that $$\overline{\chi}_b(P_n^t)=\Theta (t^2).$$
 
\item Extend the study of the balanced upper chromatic number to hypergraphs where the set of vertices is $[1,n]^d$ and the set of edges is the set of homothetic copies of a given finite set  (as Gallai's theorem extend van der Waerden's theorem).  
\end{enumerate}

Another interesting  approach involves departing from balanced colorings by considering the smallest positive integer $m$ for which every $k$-coloring of $C_t^n$, where each color appears at most $m$ times, guarantees the existence of a rainbow line. This approach was started in \cite{fjr} and was considered also in \cite{ms}.

\section{Acknowledgements}

We would like to thank  Tom Brown for  many colorful and inspiring  talks around this project. This work was partially supported by DGAPA PAPIIT project number IG100822. 


\end{document}